\newtheorem{theorem}{Theorem}
\newtheorem{lemma}{Lemma}
\newenvironment{proof}
      {\medskip\noindent{\bf Proof:}\hspace{1mm}}
      {\hfill$\Box$\medskip}
\def\Ddots{\mathinner{\mkern1mu\raise\p@
\vbox{\kern7\p@\hbox{.}}\mkern2mu
\raise4\p@\hbox{.}\mkern2mu\raise7\p@\hbox{.}\mkern1mu}}
\title{\vspace{-0.7cm}Graphs with few paths of prescribed length between any two vertices}
\author{David Conlon\thanks{Mathematical Institute, Oxford OX2 6GG, United Kingdom. E-mail: {\tt david.conlon@maths.ox.ac.uk}. Research supported by a Royal Society University Research Fellowship.}}
\date{}
\begin{document}
\maketitle

\begin{abstract}
We use a variant of Bukh's random algebraic method to show that for every natural number $k \geq 2$ there exists a natural number $\ell$ such that, for every $n$, there is a graph with $n$ vertices and $\Omega_k(n^{1 + 1/k})$ edges with at most $\ell$ paths of length $k$ between any two vertices. A result of Faudree and Simonovits shows that the bound on the number of edges is tight up to the implied constant.
\end{abstract}

\section{Introduction}

Given a graph $H$, let $\textrm{ex}(n,H)$ be the maximum number of edges in an $H$-free graph on $n$ vertices. The classic Erd\H{o}s--Stone--Simonovits theorem \cite{ES66, ES46} gives a satisfactory first estimate for this function, showing that
\[\textrm{ex}(n, H) = \left(1 - \frac{1}{\chi(H)-1} + o(1)\right) \binom{n}{2},\]
where $\chi(H)$ is the chromatic number of $H$. For bipartite $H$, this gives the bound $\textrm{ex}(n, H) = o(n^2)$. While more precise estimates are known, a number of notoriously difficult open problems remain. 

The most intensively studied case is when $H = K_{s,t}$, the complete bipartite graph with parts of order $s$ and $t$. In this case, a famous result of  K\H{o}v\'ari, S\'os and Tur\'an \cite{KST54} shows that $\textrm{ex}(n, K_{s,t}) = O_{s,t}(n^{2 - 1/s})$ whenever $s \leq t$. This bound was shown to be tight for $s = 2$ by Klein~\cite{E38} (see also~\cite{ERS66}) and for $s = 3$ by Brown \cite{B66}. For higher values of $s$, it is only known that the bound is tight when $t$ is sufficiently large in terms of $s$. This was first shown by Koll\'ar, R\'onyai and Szab\'o \cite{KRS96}, though the construction was improved slightly by Alon, R\'onyai and Szab\'o \cite{ARS99}, who showed that there are graphs with $n$ vertices and $\Omega_s(n^{2 - 1/s})$ edges containing no copy of $K_{s, t}$ with $t = (s-1)! + 1$. An alternative construction, with a slightly weaker bound on $t$, was also given by Blagojevi\'c, Bukh and Karasev \cite{BBK13}.

Very recently, Bukh \cite{B14} found a simple, elegant method for showing that the K\H{o}v\'ari--S\'os--Tur\'an bound is tight for $t$ sufficiently large in terms of $s$, fusing the algebraic techniques used in all previous constructions with an application of the probabilistic method. In this paper, we adapt his method to make progress on an equally stubborn problem.

When $H = C_{2k}$, the cycle with $2k$ vertices, a result of Erd\H{o}s (see \cite{BS74}) shows that $\textrm{ex}(n, C_{2k}) = O_k(n^{1 + 1/k})$. Since $C_4 = K_{2,2}$, the result of Klein mentioned above shows that this bound is tight for $k = 2$. For $k = 3$ and $5$, constructions matching the upper bound were found by Benson~\cite{Be66} and Singleton~\cite{S66} and later by Wenger~\cite{W91}, Lazebnik and Ustimenko~\cite{LU95} and Mellinger and Mubayi~\cite{MM05}. For general $k$, the best known lower bound on $\textrm{ex}(n, C_{2k})$ is due to Lazebnik, Ustimenko and Woldar~\cite{LUW95}, but does not match the upper bound.

If we let $\theta_{k, \ell}$ be the graph consisting of $\ell$ internally disjoint paths of length $k$, each with the same endpoints, we see that $\theta_{k, 2} = C_{2k}$ and so the problem of determining $\textrm{ex}(n, \theta_{k, \ell})$ generalises the problem of determining $\textrm{ex}(n, C_{2k})$. This problem was first studied by Faudree and Simonovits \cite{FS83}, who showed that $\textrm{ex}(n, \theta_{k, \ell}) = O_{k, \ell}(n^{1 + 1/k})$ for all $k$ and $\ell \geq 2$. The lower bounds for $\textrm{ex}(n, C_{2k})$ show that this bound is tight when $k = 2, 3$ or $5$. Additionally, a result of Verstra\"ete and Williford \cite{VW19} shows that it is tight for $k = 4$ and $\ell \geq 3$. In this paper, we generalise these results, showing that the upper bound is tight for every $k$, provided that $\ell$ is sufficiently large. This may be seen as an analogue of the Koll\'ar--R\'onyai--Szab\'o result for the cycle-free problem.

\begin{theorem} \label{main}
For any natural number $k \geq 2$, there exists a natural number $\ell$ such that
\[\mathrm{ex}(n, \theta_{k, \ell}) = \Omega_k(n^{1 + 1/k}).\]
\end{theorem}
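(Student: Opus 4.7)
I would apply a variant of Bukh's random algebraic method. Let $q$ be a large prime power, set $n = q^k$, and take the vertex set $V = \mathbb{F}_q^k$. Choose $k-1$ independent random symmetric polynomials $f_1, \ldots, f_{k-1}$ in $2k$ variables of total degree at most $d = d(k)$, with each coefficient drawn uniformly and independently from $\mathbb{F}_q$. The random graph $G$ on $V$ has an edge $xy$ whenever $x \neq y$ and $f_j(x, y) = 0$ for every $j$. Provided $d$ is large enough that the evaluations of the $f_j$ at distinct unordered pairs are independent, one has $E[e(G)] = \binom{n}{2} q^{-(k-1)} = \Theta(n^{1+1/k})$, and a second-moment calculation gives $e(G) = \Theta(n^{1+1/k})$ with high probability.

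For distinct $u, v \in V$, let $P_{u,v}$ denote the set of paths of length $k$ from $u$ to $v$; this is a constructible subset of an affine variety $W_{u,v} \subseteq V^{k-1}$ defined by $k(k-1)$ polynomial equations of bounded degree. Bezout together with Lang--Weil yields the standard dichotomy: there is a constant $C = C(d, k)$ such that for each pair either $|P_{u,v}| \leq C$ (the path-generic part of $W_{u,v}$ has dimension zero) or $|P_{u,v}| \geq q/2$ (it has positive dimension). Call $(u,v)$ bad in the latter case.

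The heart of the argument is the moment bound $\sum_{u,v} E\bigl[|P_{u,v}|^m\bigr] = O_{m,k}(q^{2k})$ for every fixed $m$. Expanding the sum over ordered $m$-tuples of paths and grouping by identification type, each type with $s$ distinct vertices (including $u, v$) and $r$ distinct edges contributes $\Theta(q^{ks - (k-1)r})$. I would prove the inequality $ks - (k-1)r \leq 2k$ by induction on $m$: if adding a new path contributes $a$ new internal vertices and $b$ new edges then $b \geq a+1$, since the $a$ new positions among the $k-1$ internal slots are incident to at least $a+1$ distinct edges of that path (a run-counting argument, tight when the new positions form a contiguous block), and the step then reduces to $ka \leq (k-1)(a+1)$, which holds for every $a \leq k-1$. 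Combining the moment bound with Markov's inequality and the dichotomy, and choosing $m = k+1$, gives $E[\#\text{bad pairs}] = O(q^{k-1}) = o(n)$.

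With positive probability $G$ therefore has $\Theta(n^{1+1/k})$ edges and $o(n)$ bad pairs. Deleting one vertex from each bad pair removes $o(n)$ vertices and, using a w.h.p.\ max-degree bound of $O(q)$ obtained by the same moment method, at most $o(n^{1+1/k})$ edges. The surviving graph has $\Omega(n)$ vertices, $\Omega(n^{1+1/k})$ edges, and at most $C$ paths of length $k$ between any pair of vertices, so contains no $\theta_{k,\ell}$ with $\ell = C+1$. The main obstacle is verifying the combinatorial inequality $b \geq a+1$ cleanly for every way the added path can overlap previously chosen ones, including the subtle cases where the new path is a reversal of, or shares a long substring with, an earlier one.
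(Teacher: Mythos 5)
Your overall strategy matches the paper's: a random algebraic construction, a moment bound for the number of paths between a fixed pair, a Lang--Weil dichotomy, and deleting a vertex from each bad pair. Two departures are worth flagging, one harmless and one a genuine gap.

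The harmless one: you work on a single vertex set with symmetric polynomials rather than in the paper's bipartite setting between two copies of $\mathbb{F}_q^k$. This forces you to re-establish the independence statement (the analogue of Lemma~\ref{graphexp}) for the space of symmetric polynomials evaluated at unordered pairs, which is extra work with no clear payoff. Likewise, the dichotomy is not quite ``Bezout plus Lang--Weil'': the variety $W_{u,v}$ is defined over $\mathbb{F}_q$ but its irreducible components need not be absolutely irreducible, so one first needs a Frobenius-orbit argument (the paper's Lemma~\ref{frob}) to write $W_{u,v}(\mathbb{F}_q)$ as a union of $\mathbb{F}_q$-points of absolutely irreducible varieties defined over $\mathbb{F}_q$ before Lang--Weil gives the ``$\leq C$ or $\geq q/2$'' split. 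By contrast, your treatment of the combinatorial inequality is correct and clean: the per-path bound $ka \leq (k-1)b$, via the run-counting estimate $b \geq a+1$ whenever $a \geq 1$, is exactly the right local statement, and the $a=0$ case is trivial.

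The real gap is the max-degree claim. Taking $m = k+1$ gives $\mathbb{E}[\#\text{bad pairs}] = O(q^{k-1}) = o(n)$, so you then need the deleted vertices to have degree $O(q)$. But the moment method does not yield a w.h.p.\ max-degree bound: with any fixed number of moments $\mathbb{E}[D_u^r] = O_r(q^r)$, Markov gives only $\mathbb{P}[D_u \geq Cq] = O(C^{-r})$, which is a constant rather than $o(n^{-1})$, so a union bound over the $n=q^k$ vertices fails. Indeed, the degree of a fixed vertex is the number of $\mathbb{F}_q$-points on a variety cut out by $k-1$ random polynomials, which can be $\Theta(q^j)$ with $j>1$ on an event of only polynomially small probability. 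The paper sidesteps this by taking $r = 2k$ moments, which drives $\mathbb{E}[\#\text{bad pairs}]$ down to $O_k(1)$; then the \emph{trivial} degree bound (each vertex has degree at most $n$) shows that only $O_k(n) = o(n^{1+1/k})$ edges are removed in expectation, and linearity of expectation finishes. Replacing your $m = k+1$ by $m = 2k$ closes the gap without any max-degree estimate.
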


More generally, we will show that for any natural number $k \geq 2$ there exists a natural number $\ell$ such that, for every $n$, there is a graph with $n$ vertices and $\Omega_k(n^{1 + 1/k})$ edges with at most $\ell$ (not necessarily disjoint) paths of length $k$ between any two vertices. Though heuristics suggest that it might be possible to take $\ell = k^{O(k^2)}$, our method falls well short of this. We have therefore made no systematic attempt to optimise (or even compute) the value of $\ell$ arising from our proof. Throughout the paper, we will use standard asymptotic notation, with subscripts indicating that the implied constants depend on these parameters.

\section{Preliminaries}

Let $q$ be a prime power and let $\mathbb{F}_q$ be the finite field of order $q$. We will consider polynomials in $t$ variables over $\mathbb{F}_q$, writing any such polynomial as $f(X)$, where $X = (X_1, \dots, X_t)$. We let $\mathcal{P}_d$ be the set of polynomials in $X$ of degree at most $d$, that is, the set of linear combinations over $\mathbb{F}_q$ of monomials of the form $X_1^{a_1} \cdots X_t^{a_t}$ with $\sum_{i=1}^t a_i \leq d$. By a random polynomial, we just mean a polynomial chosen uniformly from the set $\mathcal{P}_d$. One may produce such a random polynomial by choosing the coefficients of the monomials above to be random elements of $\mathbb{F}_q$.

We will now determine the probability that a randomly chosen polynomial from $\mathcal{P}_d$ passes through a given set of points. For one point, the probability is $1/q$, as shown by the following simple result of Bukh~\cite{B14}. We include the proof for completeness.

\begin{lemma} \label{edgeexp}
If $f$ is a random polynomial from $\mathcal{P}_d$, then, for any fixed $x \in \mathbb{F}_q^t$, 
$$\mathbb{P}[f(x) = 0] = 1/q.$$ 
\end{lemma}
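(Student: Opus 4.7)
The plan is to exploit linearity of the evaluation map $\mathrm{ev}_x : \mathcal{P}_d \to \mathbb{F}_q$ defined by $f \mapsto f(x)$. Since $\mathcal{P}_d$ is an $\mathbb{F}_q$-vector space (it is the span of the monomials $X_1^{a_1}\cdots X_t^{a_t}$ with $\sum a_i \le d$) and evaluation at a fixed point $x$ is $\mathbb{F}_q$-linear, the map $\mathrm{ev}_x$ is a linear functional. Its fibers $\mathrm{ev}_x^{-1}(c)$, $c \in \mathbb{F}_q$, are either all empty (except one) or all cosets of the kernel and hence all of equal cardinality.

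The second step is to rule out the degenerate case, i.e.\ to check that $\mathrm{ev}_x$ is surjective. But the constant polynomial $1 \in \mathcal{P}_d$ (this uses $d \ge 0$, which is implicit), so $\mathrm{ev}_x(c \cdot 1) = c$ for every $c \in \mathbb{F}_q$. Hence $\mathrm{ev}_x$ is surjective and each of its $q$ fibers has cardinality exactly $|\mathcal{P}_d|/q$. Choosing $f$ uniformly from $\mathcal{P}_d$ therefore gives $\mathbb{P}[f(x)=0] = 1/q$.

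Equivalently, and perhaps more in the spirit of a direct proof, one can construct an explicit bijection: for any $a \in \mathbb{F}_q$ the map $f \mapsto f + (a - f(x))\cdot 1$ is a bijection from $\{f \in \mathcal{P}_d : f(x)=0\}$ to $\{f \in \mathcal{P}_d : f(x)=a\}$, since adding the constant $a - f(x)$ keeps the polynomial in $\mathcal{P}_d$. Summing over $a$ gives $q \cdot |\{f : f(x)=0\}| = |\mathcal{P}_d|$.

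There is no real obstacle here; the only thing to be careful about is remembering that the constants lie in $\mathcal{P}_d$, which is what makes the evaluation functional surjective and the argument go through for every $x \in \mathbb{F}_q^t$ uniformly.
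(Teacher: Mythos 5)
Your proof is correct and rests on the same key observation as the paper's: the constants lie in $\mathcal{P}_d$, so one can shift the value of $f$ at $x$ freely, and each value in $\mathbb{F}_q$ is attained equally often. The paper phrases this as decomposing $f = g + h$ with $g \in \mathcal{Q}_d$ (zero constant term) and $h$ a constant, then noting that for fixed $g$ exactly one of the $q$ choices of $h$ makes $f(x) = 0$; your description of $\mathrm{ev}_x$ as a surjective linear functional with equal-size fibers (and the explicit bijection $f \mapsto f + (a - f(x))\cdot 1$) is the same argument in slightly more abstract dress.
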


\begin{proof}
Let $\mathcal{Q}_d = \{f \in \mathcal{P}_d: f(0) = 0\}$, that is, the collection of polynomials in $\mathcal{P}_d$ with zero constant term. Since every $f \in \mathcal{P}_d$ can be written as $g + h$ where $g \in \mathcal{Q}_d$ and $h$ is a constant, we may sample a random element of $\mathcal{P}_d$ by adding a random element $g$ of $\mathcal{Q}_d$ and a random element $h$ of $\mathbb{F}_q$. Since, for any fixed choice of $g$, there is only one choice out of $q$ for $h$ such that $f(x) = 0$, the result follows.
\end{proof}

The following result shows that once $q$ and $d$ are sufficiently large, the probability that a randomly chosen polynomial from $\mathcal{P}_d$ contains each of $m$ distinct points is exactly $1/q^{m}$. That is, the events are independent. This observation is again due to Bukh and corresponds to Lemma $4$ of his paper~\cite{B14}, though we state and prove it in greater generality.

\begin{lemma} \label{graphexp}
Suppose that $q > \binom{m}{2}$ and $d \geq m - 1$. Then, if $f$ is a random polynomial from $\mathcal{P}_d$ and $x_1, \dots, x_m$ are $m$ distinct points in $\mathbb{F}_q^t$,
$$\mathbb{P}[f(x_i) = 0 \mbox{ for all } i = 1, \dots, m] = 1/q^{m}.$$ 
\end{lemma}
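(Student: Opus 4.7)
The plan is to exploit the linearity of evaluation. Consider the map $\phi \colon \mathcal{P}_d \to \mathbb{F}_q^m$ sending $f \mapsto (f(x_1), \ldots, f(x_m))$; this is an $\mathbb{F}_q$-linear map between finite-dimensional vector spaces. The event whose probability we want is exactly $f \in \ker \phi$, so the probability equals $|\ker \phi|/|\mathcal{P}_d| = 1/|\mathrm{Im}(\phi)|$ by rank-nullity applied to $\phi$. Hence the lemma reduces to proving that $\phi$ is surjective, since then $|\mathrm{Im}(\phi)| = q^m$.

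To establish surjectivity, it is enough to exhibit, for each $i \in \{1,\ldots,m\}$, a polynomial $g_i \in \mathcal{P}_d$ with $g_i(x_j) = \delta_{ij}$, since the images $\phi(g_i)$ are then the standard basis vectors of $\mathbb{F}_q^m$. My plan is to project the point set onto a well-chosen line and then apply univariate Lagrange interpolation. Concretely, I would pick a direction $v \in \mathbb{F}_q^t$ for which the scalars $\ell_i := v \cdot x_i$ are pairwise distinct. For any single pair $i \neq j$, a uniformly random $v$ satisfies $v \cdot (x_i - x_j) = 0$ with probability $1/q$, so a union bound over the $\binom{m}{2}$ pairs guarantees such a $v$ exists precisely under the hypothesis $q > \binom{m}{2}$. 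With this $v$ fixed, I would set
\[
g_i(X) := \prod_{j \neq i} \frac{v \cdot X - \ell_j}{\ell_i - \ell_j},
\]
which lies in $\mathcal{P}_{m-1} \subseteq \mathcal{P}_d$ since $d \geq m-1$, and clearly satisfies $g_i(x_j) = \delta_{ij}$.

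This structure makes transparent the role of each hypothesis: $q > \binom{m}{2}$ is exactly what is required to ensure a one-dimensional linear projection separates the $m$ points, while $d \geq m-1$ provides enough degrees of freedom for a degree-$(m{-}1)$ Lagrange interpolant. I do not foresee a serious obstacle, as the linear-algebraic framing reduces independence of the events $\{f(x_i)=0\}$ to the purely existential question of producing the $g_i$, and both hypotheses plug in minimally to do so.
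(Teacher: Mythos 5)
Your proof is correct and takes essentially the same route as the paper: choose a linear functional that separates the $m$ points (possible since $q > \binom{m}{2}$), then use univariate Lagrange interpolation in degree $m-1 \leq d$. The paper presents this as a change of variables followed by a decomposition $f = g + h$ in which, for each fixed $g$, exactly one low-degree part $h$ works, while you phrase it via rank--nullity and explicit Lagrange basis polynomials, but the underlying argument is identical.
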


\begin{proof}
Let $x_i = (x_{i,1}, \dots, x_{i,t})$ for each $i = 1, \dots, m$. We will choose elements $a_2, \dots, a_t  \in \mathbb{F}_q$ such that $x_{i,1} + \sum_{j=2}^t a_j x_{i, j}$ is distinct for all $i = 1, \dots, m$. To see that this is possible, note that there are exactly $\binom{m}{2}$ equations 
\[x_{i,1} + \sum_{j=2}^t a_j x_{i, j} = x_{i',1} + \sum_{j=2}^t a_j x_{i', j},\] 
each with at most $q^{t-2}$ solutions $(a_2, \dots, a_t)$. Therefore, since the total number of choices for $(a_2, \dots, a_t)$ is $q^{t-1}$ and $q^{t-1} > q^{t-2} \binom{m}{2}$, we can make an appropriate choice.

We now consider $\mathcal{P}'_d$, the set of polynomials of degree at most $d$ in $Z$, where $Z_1 = X_1 + \sum_{j=2}^t a_j X_j$ and $Z_j = X_j$ for all $2 \leq j \leq t$. Since this change of variables is an invertible linear map, $\mathcal{P}'_d$ is identical to $\mathcal{P}_d$, so it will suffice to show that a randomly chosen polynomial from $\mathcal{P}'_d$ passes through all of the points $z_1, \dots, z_m$ corresponding to $x_1, \dots, x_m$ with probability $q^{-m}$. Note that, by our choice above, $z_{i,1} \neq z_{i',1}$ for any $1 \leq i < i' \leq m$.

For any $f$ in $\mathcal{P}'_d$, we may write $f = g + h$, where $h$ contains all monomials of the form $Z_1^j$ for $j = 0, 1, \dots, m-1$ and $g$ contains all other monomials. For any fixed choice of $g$, there is exactly one choice of $h$ such that $f(z_i) = 0$ for all $i = 1, \dots, m$, namely, the unique polynomial of degree at most $m-1$ which takes the value $-g(z_i)$ at $z_{i,1}$ for all $i = 1, 2, \dots, m$, where uniqueness follows from the fact that the $z_{i,1}$ are distinct. Since there were $q^m$ possible choices for $h$, the result follows. 
\end{proof}

We also need to note some basic facts about affine varieties over finite fields. If we write $\overline{\mathbb{F}}_q$ for the algebraic closure of $\mathbb{F}_q$, a variety over $\overline{\mathbb{F}}_q$ is a set of the form
\[W = \{x \in \overline{\mathbb{F}}_q^t : f_1(x) = \dots = f_s(x) = 0\}\]
for some collection of polynomials $f_1, \dots, f_s : \overline{\mathbb{F}}_q^t \rightarrow \overline{\mathbb{F}}_q$. We say that $W$ is defined over $\mathbb{F}_q$ if the coefficients of these polynomials are in $\mathbb{F}_q$ and write $W(\mathbb{F}_q) = W \cap \mathbb{F}_q^t$. We say that $W$ has complexity at most $M$ if $s$, $t$ and the degrees of the $f_i$ are all bounded by $M$. Finally, we say that a variety is absolutely irreducible if it is irreducible over $\overline{\mathbb{F}}_q$, reserving the term irreducibility for irreducibility over $\mathbb{F}_q$ of varieties defined over $\mathbb{F}_q$.

The first result we will need is the Lang--Weil bound \cite{LW54} relating the dimension of a variety $W$ to the number of points in $W(\mathbb{F}_q)$. Recall that the dimension $\dim W$ of an affine variety $W$ is the maximum integer $d$ such that there exists a chain of absolutely irreducible subvarieties of $W$ of the form
\[\emptyset \subsetneq \{p\} \subsetneq W_1 \subsetneq W_2 \subsetneq \dots \subsetneq W_d \subset W,\]
where $p$ is a point. As originally stated, the Lang--Weil bound applies to projective varieties, but it is a simple matter to deduce the version for affine varieties given below.

\begin{lemma} \label{LW}
Suppose that $W$ is a variety over $\overline{\mathbb{F}}_q$ of complexity at most $M$. Then 
$$|W(\mathbb{F}_q)| = O_M(q^{\dim W}).$$
Moreover, if $W$ is defined over $\mathbb{F}_q$ and absolutely irreducible, then 
$$|W(\mathbb{F}_q)| = q^{\dim W}(1 + O_M(q^{-1/2})).$$
\end{lemma}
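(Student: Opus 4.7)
The plan is to treat the two bounds separately, since only the first is elementary while the second genuinely requires the Weil bound for curves. For the upper estimate $|W(\mathbb{F}_q)| = O_M(q^{\dim W})$, I would proceed by induction on $\dim W$. One clean route is to invoke Noether normalisation to produce a finite surjection from $W$ onto affine space of dimension $\dim W$ with fibres of size bounded in terms of $M$, which yields the bound immediately. A more hands-on alternative is to project away one coordinate at a time: if $f_1$ is a defining polynomial of degree at most $M$, then for each fixed $(x_2, \dots, x_t)$ either $f_1(\cdot, x_2, \dots, x_t)$ vanishes at at most $M$ points of $\mathbb{F}_q$, or it is identically zero, which forces $(x_2, \dots, x_t)$ to lie in a subvariety of $\mathbb{F}_q^{t-1}$ of complexity bounded by a function of $M$. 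Combining the trivial fibre bound with the inductive hypothesis on the degenerate locus closes the induction.

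For the asymptotic formula, the starting point is Weil's theorem, which states that an absolutely irreducible curve of degree at most $M$ over $\mathbb{F}_q$ has $q + O_M(q^{1/2})$ rational points. To pass from curves to arbitrary dimension, I would slice $W$ with affine hyperplanes $H$ drawn from the parametrising family of size comparable to $q^t$. A Bertini-type statement over $\overline{\mathbb{F}}_q$ guarantees that all but $O_M(q^{t-1})$ of these hyperplanes yield an intersection $W \cap H$ that is absolutely irreducible of dimension $\dim W - 1$ and complexity bounded in terms of $M$. Double counting the incidences between $W(\mathbb{F}_q)$ and the hyperplanes gives
\[|W(\mathbb{F}_q)| \cdot q^{t-1} = \sum_H |W(\mathbb{F}_q) \cap H|,\]
where the good hyperplanes contribute $q^{\dim W - 1}(1 + O_M(q^{-1/2}))$ each by the inductive hypothesis, and the bad hyperplanes contribute only $O_M(q^{t + \dim W - 2})$ in total by the first part of the lemma. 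Dividing by $q^{t-1}$ yields the claimed estimate.

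The technical heart of the argument, and therefore the main obstacle, is the quantitative Bertini-type statement: one must show that the hyperplanes for which $W \cap H$ fails to be absolutely irreducible of the expected dimension form a subvariety of codimension at least one in the parametrising space, so that they contribute only a lower-order perturbation to the double count. Once this is in hand, the square-root error $O_M(q^{-1/2})$ propagates cleanly from the Weil base case through the induction. Any cruder workaround, such as a bare projection argument, would at best recover the main-term asymptotic and lose the $q^{-1/2}$ saving; this is precisely the obstruction that forced Lang and Weil to develop their hyperplane-section machinery in the first place, and I would not attempt to circumvent it.
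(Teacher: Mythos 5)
The paper does not prove this lemma: it is stated as the classical Lang--Weil theorem and cited directly from Lang and Weil's 1954 paper \cite{LW54}, so there is no in-text argument to compare your proposal against. Within the paper's scope, the appropriate move is to cite the result (or a modern reference making the uniformity in the complexity $M$ explicit), not to reprove it.

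That said, your sketch is a faithful outline of the standard route. The elementary bound via either Noether normalisation or iterated projection (Schwartz--Zippel plus induction on the degenerate locus) is correct. For the asymptotic, the hyperplane-slicing induction from the Weil curve bound is essentially the original Lang--Weil argument, and you are right that the quantitative Bertini-type statement is where the real work sits. A few subtleties are worth flagging if you ever wanted to turn this into a complete proof: the base case needs a uniform Weil bound for possibly singular affine curves of bounded complexity (one passes to the smooth projective model and controls the discrepancy by $O_M(1)$); in the double count one should separate the hyperplanes that contain $W$ outright, since there $W \cap H = W$ is not lower-dimensional, though they are few enough ($O(q^{t-1-\dim W})$) not to matter; and the induction terminates at curves, so the slicing step should be phrased for $\dim W \geq 2$. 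None of these is a gap in the idea, but they are the places the bookkeeping is delicate.
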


We will also need the following standard result from algebraic geometry (see, for example, \cite{B98}), which says that if $W$ is an irreducible affine variety over an algebraically closed field and $g$ is a polynomial whose zero set intersects $W$ then either $W$ is contained in the zero set of $g$ or its intersection with this zero set has smaller dimension.

\begin{lemma} \label{dimdrop}
Suppose that $W$ is an absolutely irreducible variety over $\overline{\mathbb{F}}_q$ and $\dim W \geq 1$. Then, for any polynomial $g: \overline{\mathbb{F}}_q^t \rightarrow \overline{\mathbb{F}}_q$, $W \subseteq \{x : g(x) = 0\}$ or $W \cap \{x : g(x) = 0\}$ is a variety of dimension less than $\dim W$.
\end{lemma}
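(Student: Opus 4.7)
The plan is to assume $W \not\subseteq \{x : g(x) = 0\}$ and show that $W' := W \cap \{x : g(x) = 0\}$ is a variety of dimension strictly less than $\dim W$. That $W'$ is itself a variety (and has complexity bounded in terms of $M$ and $\deg g$) is immediate: if $W$ is cut out by $f_1, \dots, f_s$, then $W'$ is cut out by $f_1, \dots, f_s, g$.

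The substantive content is the dimension drop. I would decompose $W'$ into its absolutely irreducible components $W' = W'_1 \cup \cdots \cup W'_r$, so that $\dim W' = \max_i \dim W'_i$. Each $W'_i$ is a closed subvariety of $W$, and because $g$ does not vanish identically on $W$ by assumption, none of the $W'_i$ can equal $W$; hence each $W'_i$ is a proper closed irreducible subvariety of $W$. It therefore suffices to show that any proper closed irreducible subvariety of an absolutely irreducible variety has strictly smaller dimension.

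This last step is standard dimension theory over an algebraically closed field. The coordinate ring $\overline{\mathbb{F}}_q[W]$ is a finitely generated integral domain (the domain property being equivalent to absolute irreducibility of $W$), its Krull dimension equals $\dim W$, and by the Nullstellensatz any proper closed irreducible subvariety $W'_i \subsetneq W$ corresponds to a nonzero prime ideal $\mathfrak{p} \subset \overline{\mathbb{F}}_q[W]$, whence $\dim W'_i = \dim W - \mathrm{ht}(\mathfrak{p}) \leq \dim W - 1$. Alternatively, one may invoke Krull's Hauptidealsatz directly: the image $\bar g$ of $g$ in $\overline{\mathbb{F}}_q[W]$ is a nonzero element of a domain, so every minimal prime over $(\bar g)$ has height exactly $1$, giving $\dim W' = \dim W - 1$. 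There is no real obstacle in the argument; the only thing to check is that the various standard definitions of dimension (chains of irreducible closed subvarieties, Krull dimension of the coordinate ring, transcendence degree of the function field) coincide and yield the desired one-step drop, all of which is textbook material and is why the excerpt is content to cite a reference such as \cite{B98}.
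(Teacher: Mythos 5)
Your argument is correct. Note, however, that the paper offers no proof of this lemma at all: it is stated as a standard fact with a pointer to a textbook (\cite{B98}), so there is no in-paper argument to compare against. You have simply filled in the standard justification, and you have done it accurately. The reduction to showing that a proper closed irreducible subvariety of an irreducible affine variety has strictly smaller dimension is the right move, and both of your routes (the dimension formula $\dim W'_i = \dim W - \mathrm{ht}(\mathfrak p)$, or Krull's Hauptidealsatz applied to the nonzero element $\bar g$ of the domain $\overline{\mathbb{F}}_q[W]$) are valid. One point worth making explicit if you were to expand this: the equality $\dim R/\mathfrak p + \mathrm{ht}(\mathfrak p) = \dim R$ is not a general Noetherian fact; it holds here because $\overline{\mathbb{F}}_q[W]$ is a finitely generated domain over a field, hence catenary with the dimension formula. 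Also, the lemma only asks for a strict drop, so your first, softer argument (proper irreducible closed subvariety implies smaller dimension) already suffices; the Hauptidealsatz gives the sharper $\dim W' = \dim W - 1$ (when $W'$ is nonempty), which is more than needed.
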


The final ingredient we require is again essentially due to Bukh \cite{B14} and says that if $W$ is a variety which is defined over $\mathbb{F}_q$, then there is a finite collection of absolutely irreducible varieties $Y_1, \dots, Y_s$, each of which is defined over $\mathbb{F}_q$, such that $\cup_{i=1}^s Y_i(\mathbb{F}_q) = W(\mathbb{F}_q)$. Since it is less standard than the previous two lemmas, we will include the proof. We will repeatedly use the fact that a variety $X$ is defined over $\mathbb{F}_q$ if and only if it is fixed by the Frobenius automorphism taking any element $x$ to $x^q$. The non-trivial direction of this equivalence requires one to consider the reduced Gr\"obner basis of the ideal associated to the variety (see, for example, the proof of Corollary 4 in Tao's blog post on the Lang--Weil bound \cite{T12}).

\begin{lemma} \label{frob}
Suppose that $W$ is a variety over $\overline{\mathbb{F}}_q$ of complexity at most $M$ which is defined over $\mathbb{F}_q$. Then there are $O_M(1)$ absolutely irreducible varieties $Y_1, \dots, Y_s$, each of which is defined over $\mathbb{F}_q$ and has complexity $O_M(1)$, such that $\cup_{i=1}^s Y_i(\mathbb{F}_q) = W(\mathbb{F}_q)$.
\end{lemma}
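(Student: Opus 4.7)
The plan is to decompose $W$ into its absolutely irreducible components over $\overline{\mathbb{F}}_q$, observe that the Frobenius automorphism permutes these components, and then argue that the components not individually fixed by Frobenius contribute $\mathbb{F}_q$-points only on strictly lower-dimensional subvarieties. Combined with induction on $\dim W$, this isolates the good ``Frobenius-fixed'' components while reducing the residual $\mathbb{F}_q$-points to a lower-dimensional problem.

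In more detail, I would first invoke the standard fact (a consequence of primary decomposition together with effective Nullstellensatz bounds) that any variety of complexity at most $M$ can be written as $W = Z_1 \cup \cdots \cup Z_r$, where each $Z_i$ is absolutely irreducible, $r = O_M(1)$, and each $Z_i$ has complexity $O_M(1)$. Since $W$ is defined over $\mathbb{F}_q$, it is fixed by the Frobenius automorphism $\phi : x \mapsto x^q$; because the decomposition into absolutely irreducible components is unique, $\phi$ permutes the $Z_i$. Any $Z_i$ whose Frobenius orbit has length one satisfies $\phi(Z_i) = Z_i$, and hence is defined over $\mathbb{F}_q$; these components may be collected as candidate $Y_j$'s directly.

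For a component $Z_i$ in a Frobenius orbit of length greater than one, any $\mathbb{F}_q$-point $x \in Z_i(\mathbb{F}_q)$ satisfies $x = \phi(x) \in \phi(Z_i)$, so $x \in Z_i \cap \phi(Z_i)$. Since $Z_i$ is absolutely irreducible and $\phi(Z_i) \neq Z_i$, Lemma \ref{dimdrop} (applied to the defining equations of $\phi(Z_i)$, at least one of which does not vanish on $Z_i$) gives $\dim(Z_i \cap \phi(Z_i)) < \dim Z_i$. Let
\[W' = \bigcup_{i : \phi(Z_i) \neq Z_i} \bigl(Z_i \cap \phi(Z_i)\bigr).\]
This is a union of $O_M(1)$ varieties of complexity $O_M(1)$, so $W'$ itself has complexity $O_M(1)$; and $W'$ is defined over $\mathbb{F}_q$ because $\phi$ sends $Z_i \cap \phi(Z_i)$ to $\phi(Z_i) \cap \phi^2(Z_i)$, which is another term of the same union. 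By construction, $W(\mathbb{F}_q)$ is the union of $Z_i(\mathbb{F}_q)$ over the Frobenius-fixed components together with $W'(\mathbb{F}_q)$, and $\dim W' < \dim W$.

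I would then proceed by induction on $\dim W$, the base case $\dim W = 0$ being a finite set (so we can take the $Y_i$ to be single points). The inductive step applies the above to $W$ and then applies the inductive hypothesis to $W'$. Since $\dim W \leq t \leq M$, the induction terminates after at most $M+1$ steps, and at each step the number of varieties produced and their complexities blow up by only an $O_M(1)$ factor, yielding the desired $O_M(1)$ bounds. The main conceptual point, and what I expect to require the most care in writing, is the verification that $W'$ really is defined over $\mathbb{F}_q$ and that its complexity stays under control through the iteration; everything else reduces to the decomposition into absolutely irreducible components and the single application of Lemma \ref{dimdrop}.
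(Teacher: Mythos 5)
Your proof is correct and follows the same broad strategy as the paper — decompose into absolutely irreducible components, observe Frobenius permutes them, keep the fixed ones, and reduce the rest to a lower-dimensional residual via Lemma~\ref{dimdrop} — but the bookkeeping differs in a way worth noting. The paper first decomposes $W$ into $\mathbb{F}_q$-irreducible components, and for a component $X$ that is not absolutely irreducible, uses the \emph{transitivity} of the Frobenius action on its absolutely irreducible pieces $X_1,\dots,X_r$ to conclude $X(\mathbb{F}_q)\subseteq X_1\cap\cdots\cap X_r$, an intersection that is automatically Frobenius-stable because it is a full Galois orbit. You skip the intermediate decomposition over $\mathbb{F}_q$, work directly with the absolutely irreducible components $Z_i$ of $W$, and take the coarser residual $W'=\bigcup_{\phi(Z_i)\neq Z_i}(Z_i\cap\phi(Z_i))$; the extra work you then have to do is to verify by hand that this union is Frobenius-stable, which you do correctly by noting that $\phi$ cyclically shifts its terms. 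Both arguments give the same dimension drop and the same $O_M(1)$ control, so this is a legitimate variant. One small point you should address: when you invoke Lemma~\ref{dimdrop} on a non-fixed $Z_i$, that lemma requires $\dim Z_i\geq 1$. The zero-dimensional non-fixed components are harmless — a point of $\mathbb{A}^t$ not fixed by Frobenius contributes no $\mathbb{F}_q$-points, and $Z_i\cap\phi(Z_i)=\emptyset$ — but this should be said explicitly rather than absorbed silently into the induction base case, since those $Z_i$ can occur even when $\dim W>0$.
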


\begin{proof}
We begin by splitting $W$ into irreducible components, noting, for instance, by Lemma 11 of~\cite{T10}, that the number and complexity of these components, each of which is defined over $\mathbb{F}_q$, is bounded by a function of the complexity of $W$. Let $X$ be an irreducible component of $W$. If $X$ is absolutely irreducible, we set it aside as one of our $Y_i$. If $X$ is not absolutely irreducible, we let $X_1, \dots, X_r$ be the absolutely irreducible components of $X$, noting again that the number and complexity of these components is bounded by a function of the complexity of $W$. Since $X$ is defined over $\mathbb{F}_q$, the Frobenius automorphism $x \mapsto x^q$ acts on its components, permuting $X_1, \dots, X_r$. Moreover, this action is transitive. This is because the union of the sets in any orbit is fixed by the Frobenius automorphism and so defined over $\mathbb{F}_q$. Therefore, if the action were not transitive, $X$ would be reducible over $\mathbb{F}_q$. By transitivity, we have that $X(\mathbb{F}_q) \subseteq X_i$ for all $i = 1, \dots, r$. But then $X(\mathbb{F}_q) \subseteq X_1 \cap \dots \cap X_r$, which, since $\dim X_1 \cap X_2 < \dim X_1 \leq \dim W$, is a variety of lower dimension than $W$ that is defined over $\mathbb{F}_q$ and has complexity bounded by a function of the complexity of $W$. We may now repeat the entire procedure with $X_1 \cap \dots \cap X_r$. Since the dimension is bounded below by zero,  this iteration must eventually terminate.
\end{proof}

\section{The construction}

Let $t = r = 2k$, $d = kr$, $N = q^k$ and suppose that $q$ is sufficiently large. Let $f_1, \dots, f_{k-1} : \mathbb{F}_q^k \times \mathbb{F}_q^k \rightarrow \mathbb{F}_q$ be independent random polynomials in $\mathcal{P}_d$. We consider the bipartite graph $G$ between two copies $U$ and $V$ of $\mathbb{F}_q^k$, each of order $N = q^k$, where $(u, v)$ is an edge of $G$ if and only if 
$$f_1(u, v) = \dots = f_{k-1}(u,v) = 0.$$ 
Since $f_1, \dots, f_{k-1}$ were chosen independently, Lemma~\ref{edgeexp} tells us that the probability a given edge $(u,v)$ is in $G$ is $q^{-(k-1)}$. Therefore, the expected number of edges in $G$ is $q^{-(k-1)} N^2 = N^{1 + 1/k}$.

Suppose now that $w_1$ and $w_2$ are two fixed vertices in $G$ and let $S$ be the set of paths of length $k$ between them. We will be interested in estimating the $r$-th moment of $|S|$. To begin, we note that $|S|^r$ counts the number of ordered collections of $r$ (possibly overlapping or identical) paths of length $k$ in $G$ between $w_1$ and $w_2$. Since the total number of edges $m$ in any given collection of $r$ paths is at most $k r = d$ and $q$ is sufficiently large, Lemma~\ref{graphexp} tells us that the probability that particular collection of paths is in $G$ is $q^{-(k-1)m}$, where we again used the fact that $f_1, \dots, f_{k-1}$ are chosen independently. 

Within the complete bipartite graph between $U$ and $V$, let $P_{r, m}$ be the number of ordered collections of $r$ paths, each of length $k$, from $w_1$ to $w_2$ whose union has $m$ edges. Then
\[\mathbb{E}[|S|^r] = \sum_{m = 1}^{kr} P_{r,m} q^{-(k-1)m}.\]
To estimate $P_{r, m}$ and hence $\mathbb{E}[|S|^r]$, we will show that if the union of $r$ paths, each of length $k$, has $m$ edges then the number of vertices $n$ other than $w_1$ and $w_2$ satisfies $kn \leq (k-1)m$. To see this, suppose that $p_1, \dots, p_r$ are paths of length $k$ whose union has $m$ edges. We consider the paths in sequence, letting $n_i$ be the number of vertices and $m_i$ the number of edges in $p_i \setminus p_1 \cup \dots \cup p_{i-1}$. If $n_i \neq 0$, we have 
$$m_i \geq n_i + 1 \geq \frac{n_i + 1}{n_i} n_i \geq \frac{k}{k-1} n_i.$$ 
Summing over all $i$ for which $n_i \neq 0$ gives the required inequality. Therefore, $P_{r,m} = O_k(N^{(k-1)m/k})$ and
\[\mathbb{E}[|S|^r] = \sum_{m = 1}^{kr} P_{r,m} q^{-(k-1)m} = \sum_{m=1}^{kr} O_k(N^{(k-1)m/k}) q^{-(k-1)m} = \sum_{m=1}^{kr} O_k(q^{(k-1)m}) q^{-(k-1)m} = O_k(1).\]
By Markov's inequality, we may conclude that, for any positive $s$,
\[\mathbb{P}[|S| \geq s] = \mathbb{P}[|S|^r \geq s^r] \leq \frac{\mathbb{E}[|S|^r]}{s^r} = \frac{O_k(1)}{s^r}.\]
We now note that the set of paths $S$ is a subset of $T(\mathbb{F}_q)$, where
\[T = \{(x_1, \dots, x_{k-1}) \in \overline{\mathbb{F}}_q^{k(k-1)} : f_i(w_1, x_1) = f_i(x_2, x_1) = \dots = f_i(x_{k-1}, w_2) = 0 \mbox{ for } i = 1, \dots, k-1\}.\]
Note that, depending on which sides of the bipartition contain $w_1$ and $w_2$, the order of the two variables in many of these equations may need to be reversed. However, this makes little difference to what follows, so we will assume that the orders are as given above. 

If $T(\mathbb{F}_q)$ were equal to $S$, we could apply Lemma 5 from \cite{B14} to show that $S$ is either bounded by a constant or quite large and then use the corollary of Markov's inequality proved above to show that there are very few pairs $(w_1, w_2)$ for which $S$ is large. Unfortunately, $T(\mathbb{F}_q)$ may contain degenerate walks as well as the paths we are interested in, so we must somehow take these into account.

If we write $x_0$ for $w_1$ and $x_k$ for $w_2$, we see that if $T(\mathbb{F}_q)$ contains a degenerate walk $w_1, x_1, \dots, x_{k-1}, w_2$, then it must be the case that $x_a = x_b$ for some $a$ and $b$ with $0 \leq a < b \leq k$ and $b - a$ even. This naturally leads us to consider the collections of sets
\[T_{ab} = T \cap \{(x_1, \dots, x_{k-1}) : x_a = x_b\}\] 
for all $a$ and $b$ with $0 \leq a < b \leq k$ and $b - a$ even.

Since $T$ is defined over $\mathbb{F}_q$ and has complexity bounded in terms of $k$, Lemma~\ref{frob} tells us that there are $O_k(1)$ absolutely irreducible varieties $Y_1, \dots, Y_s$, each of which is defined over $\mathbb{F}_q$ and has complexity $O_k(1)$, such that $\cup_{i=1}^s Y_i(\mathbb{F}_q) = T(\mathbb{F}_q)$. If $\dim Y_i \geq 1$, Lemma~\ref{dimdrop} tells us that either there exist $a$ and $b$ such that $Y_i \subseteq T_{ab}$ or the dimension of $Y_i \cap T_{ab}$ is smaller than the dimension of $Y_i$ for all $a$ and $b$. If $Y_i \subseteq T_{ab}$ for some $a$ and $b$, the component does not contain any non-degenerate paths and may be removed from consideration. If instead the dimension of $Y_i \cap T_{ab}$ is smaller than the dimension of $Y_i$ for all $a$ and $b$, the Lang--Weil bound, Lemma~\ref{LW}, tells us that for $q$ sufficiently large
\[|S| \geq |Y_i(\mathbb{F}_q)| - \sum_{a,b} |Y_i \cap T_{ab}(\mathbb{F}_q)| \geq q^{\dim Y_i} - O_k(q^{\dim Y_i - \frac{1}{2}}) - O_k(q^{\dim Y_i - 1}) \geq \frac{q}{2}.\]
On the other hand, if $\dim Y_i = 0$ for every $Y_i$ which is not contained in some $T_{ab}$, Lemma~\ref{LW} tells us that $|S| \leq \sum |Y_i(\mathbb{F}_q)| =O_k(1)$, where the sum is taken over all $i$ for which $\dim Y_i = 0$.

Putting everything together, we see that that there exists a constant $c_k$, depending only on $k$, such that either $|S| \leq c_k$ or $|S| \geq q/2$. Therefore, by the consequence of Markov's inequality noted earlier,
\[\mathbb{P}[|S| > c_k] = \mathbb{P}[|S| \geq q/2] \leq \frac{O_k(1)}{(q/2)^r}.\]
We call a pair of vertices $(w_1, w_2)$ bad if there are more than $\ell = c_k$ paths between them. If we let $B$ be the random variable counting the number of bad pairs, we have, since $r = 2k$,
\[\mathbb{E}[B] \leq 2N^2 \cdot \frac{O_k(1)}{(q/2)^r} = O_k(q^{2k - r}) = O_k(1).\]
We now remove a vertex from each bad pair to form a new graph $G'$. Since each vertex has degree at most $N$, the total number of edges removed is at most $B N$. Hence, the expected number of edges is
\[N^{1 + 1/k} - \mathbb{E}[B] N = \Omega_k(N^{1 + 1/k}).\]
Therefore, there is a graph with at most $2N$ vertices and $\Omega_k(N^{1 + 1/k})$ edges such that no two vertices have more than $\ell = c_k$ paths of length $k$ between them. As stated, this result only holds when $q$ is a prime power and $N = q^k$. However, it is a simple matter to use Bertrand's postulate to show that the same conclusion holds for all $N$.

\vspace{3mm}
\noindent
{\bf Acknowledgements.} I would like to thank Boris Bukh, Gal Kronenberg, Rudi Mrazovi\'c and Lisa Sauermann for a number of valuable comments on an earlier draft of this paper. I would also like to thank the anonymous referee for their considered review.

\end{document}